\journal{~}
\newtheorem{theorem}{Theorem}
\newtheorem{definition}{Definition}
\newtheorem{lemma}{Lemma}
\newtheorem{remark}{Remark}
\newtheorem{observation}{Observation} 
\newtheorem{corollary}{Corollary}
\definecolor{myhamcyclecolor}{HTML}{000000}
\definecolor{mystdegdecolor}{HTML}{999999}
\def\ps@pprintTitle{
  \let\@oddhead\@empty
  \let\@evenhead\@empty
  \let\@oddfoot\@empty
  \let\@evenfoot\@oddfoot
}
\begin{document}

\begin{frontmatter}

	\title{Putting Tutte's counterexample to Tait's conjecture in perspective to hamiltonicity and non-hamiltonicity in certain planar cubic graphs}

	\author[label1]{Herbert Fleischner}
	\author[label1]{Enrico Iurlano}
	\author[label1]{Günther R.~Raidl\corref{cor1}} \cortext[cor1]{Corresponding author}
	\affiliation[label1]{organization={Algorithms and Complexity Group, TU Wien},city={Vienna},
		country={Austria}}

	\begin{abstract}
		Using the graphs of prisms and Tutte Fragments, we construct an infinite family of hamiltonian and non-hamiltonian graphs in which Tutte's counterexample to Tait's conjecture appears in a certain sense as a minimal element.
		We observe that generalizations of the minimum-cardinality counterexamples of Holton and McKay to Tait's conjecture are as well contained in this family.
	\end{abstract}

	\begin{keyword}
		Hamiltonicity \sep Tutte fragment \sep Tait's conjecture \sep Prisms.
		\MSC 05C45 \sep 05C38 \sep 05C83
	\end{keyword}

\end{frontmatter}

\nocite{bagheri2021hamiltonian,bagheri2022finding,bondy1976graph,cada2004hamiltonian,fleischner1988prism,holton1988smallest,tutte1946hamiltonian}

\section{Introduction and preliminary considerations}\label{sec:notation-and-preliminaries}

The \emph{Tutte Fragment} (TF) is a particular planar subcubic graph with only three $2$-valent vertices~\cite{tutte1946hamiltonian}.
Figure~\ref{fig:isolated-tutte-fragment} illustrates it by placing the three $2$-valent vertices $a$, $b$, and $c$ on the extreme points of an equilateral triangle.
The TF has the property that no hamiltonian paths exists with endpoints $a$ and $b$, but there are hamiltonian paths from $a$ to $c$ and $b$ to $c$.
In $1946$, W.~T.~Tutte~\cite{tutte1946hamiltonian} constructed a $3$-connected planar cubic graph on $46$ vertices admitting no hamiltonian cycle by suitably linking copies of the TF via their $2$-valent vertices.
This counterexample disproved for the first time the longstanding conjecture of Tait from $1884$, claiming that every $3$-regular, $3$-connected, planar graph is hamiltonian.

Note, however, that D.~W.~Barnette proposed in 1969 the conjecture that every planar, $3$-connected, cubic bipartite graph is hamiltonian.
So far, there is only one major partial solution of this conjecture; see \cite{bagheri2021hamiltonian}.
Calling such graphs \emph{Barnette graphs} it was shown there that if two of the color classes of a Barnette graph consist of quadrilaterals and hexagons only, then it is hamiltonian (in fact, the main result there is even somewhat more general).
Moreover, it was shown in~\cite{bagheri2022finding} that if Barnette's conjecture is false, then the decision problem whether a Barnette graph is hamiltonian, is NP-complete.
The paper~\cite{cada2004hamiltonian} studies decompositions into hamiltonian cycles of prisms over $3$-connected planar bipartite cubic graphs; furthermore, it shows that prisms over any $3$-connected cubic graph are hamiltonian.
Prior to~\cite{cada2004hamiltonian} it had been proved by Fleischner,
without invoking the Four Color Theorem, that the prism of a 2-connected
planar cubic graphs is hamiltonian~\cite{fleischner1988prism}.

The aim of the current work is to show that Tutte's counterexample but also the counterexample of Holton and McKay to Tait's conjecture are not isolated incidences.
Rather, they are in a way minimal examples in an infinite family of counterexamples to Tait's conjecture.
Moreover, the same TF-inflations (see below for this concept) applied to even-sided prisms yield hamiltonian graphs (Theorem~\ref{thm:main-theorem-equivalence-existence-parity} and Corollary~\ref{cor:inflated-prism-two-pillars-case}).

\bigskip

In our considerations all appearing graphs are assumed to be simple and undirected.
For standard terminology of graph theory we refer to the textbook by Bondy and Murty~\cite{bondy1976graph}.
The \textit{$n$-prism} $C_{n}\,\square\,K_2$ is the Cartesian product of $C_n$, the cycle of length $n\geq 3$, and $K_2$, the path on two vertices.
For simplicity, assume the following classification of the vertices: Let the vertex set be partitioned into vertices of a \emph{base cycle} $\{s_i: i\in\mathbb{Z}_n\}$ and vertices of a \emph{top cycle} $\{t_i: i\in\mathbb{Z}_n\}$; accordingly, the set of edges is given by the union of both cycles' edges $\{{s_i}{s_{i+1}}, {t_i}{t_{i+1}} : i\in\mathbb{Z}_n\}$ and the set of \emph{pillars} $\{{s_i}{t_i}:i\in\mathbb{Z}_{n}\}$.

A \emph{triangle inflation} (see also~\cite[p.~51]{cada2004hamiltonian}) replaces a $3$-valent vertex $v$ with a triangle $\Delta$ having vertex set $V(\Delta)=\{v^a, v^b, v^c\}$ such that each of the triangle's vertices is connected to a unique former neighbor of $v$.
Just as one speaks of inflating a $3$-valent vertex into a triangle, we will speak of \emph{TF-inflations}.
\begin{definition}[TF-inflation]
	A TF-inflation of a $3$-valent vertex $v$ is a triangle inflation of $v$ followed by the following operations:
	The edges of $\Delta$ are deleted and each $v^x$, $x\in\{a,b,c\}$, is identified with a unique corresponding $2$-valent vertex $x$ of the TF.
	The resulting graph (having $14$ additional vertices) is $3$-regular; it is said to derive from $C_{n}\,\square\,K_2$ by TF-inflation.
\end{definition}
Given the asymmetry of the TF, depending on which of the former $v$-incident edges are associated to $v^a$, $v^b$, or $v^c$ of the TF, we obtain six possibilities for a TF-inflation.
However, in the case of prisms, for every TF-inflation we insist that $v^c = c$ is incident to the corresponding pillar, leaving two possibilities for a TF-inflation in $n$-prisms.
After TF-inflations we regard all edges of the prism contained in $\{{s_i}{t_i}, {s_i^c}{t_i}, {t_i^c}{s_i}, {t_i^c}{s_i^c} :i\in\mathbb{Z}_{n}\}$ as \emph{pillars of the TF-inflated prism}.
TF-inflations on a prism are illustrated in Fig.~\ref{fig:tf-inflation-prism-multiple-pillars}.
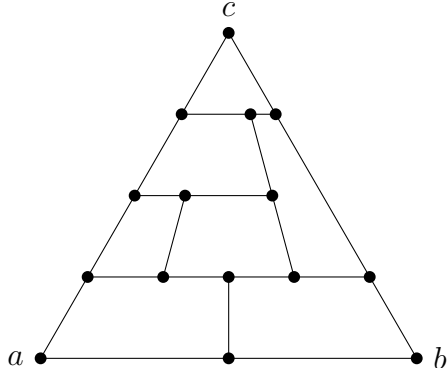
\begin{figure}[tb!]
	\centering
	\begin{tikzpicture}[line cap=round,line join=round,x=0.5cm,y=0.5cm,bv/.style={
					circle,
					draw=black,
					fill=black,
					thick,
					inner sep=1.3pt,
					minimum size=0.09cm
				}]
		\clip(-1,-0.25) rectangle (11,9.76);
		\draw (0,0)-- (5,8.660254037844386);
		\draw (10,0)-- (5,8.660254037844386);
		\draw (0,0)-- (10,0);
		\draw (1.25,2.1650635094610955)-- (8.75,2.165063509461097);
		\draw (2.5,4.330127018922193)-- (6.160254037844386,4.330127018922193);
		\draw (3.75,6.495190528383288)-- (6.25,6.495190528383288);
		\draw (5.580127018922194,6.495190528383287)-- (6.74038105676658,2.165063509461097);
		\draw (3.839745962155613,4.330127018922193)-- (3.25961894323342,2.1650635094610955);
		\draw (5,2.1650635094610964)-- (5,0);

		\node at (10,0) [bv,label=right:{$b$}]{};
		\node at (0,0) [bv,label=left:{$a$}]{};
		\node at (5,8.660254037844386) [bv,label=above:{$c$}]{};
		\node at (5,2.1650635094610964) [bv]{};
		\node at (1.25,2.1650635094610955) [bv]{};
		\node at (2.5,4.330127018922193) [bv]{};
		\node at (3.75,6.495190528383288) [bv]{};
		\node at (8.75,2.165063509461097) [bv]{};
		\node at (6.25,6.495190528383288) [bv]{};
		\node at (3.25961894323342,2.1650635094610955) [bv]{};
		\node at (6.74038105676658,2.165063509461097) [bv]{};
		\node at (3.839745962155613,4.330127018922193) [bv]{};
		\node at (6.160254037844386,4.330127018922193) [bv]{};
		\node at (5.580127018922194,6.495190528383287) [bv]{};
		\node at (5,0) [bv]{};
	\end{tikzpicture}

	\caption{Celebrated Tutte fragment (TF) found in $1946$.
		No hamiltonian path of the TF with endpoints $a$ and $b$ exists.
		However, certain hamiltonian paths with endpoint-pair $(a,c)$ as well as $(b,c)$ exist.}\label{fig:isolated-tutte-fragment}
\end{figure}
\begin{figure}[tb!]
	\centering
	\includegraphics[scale=1]{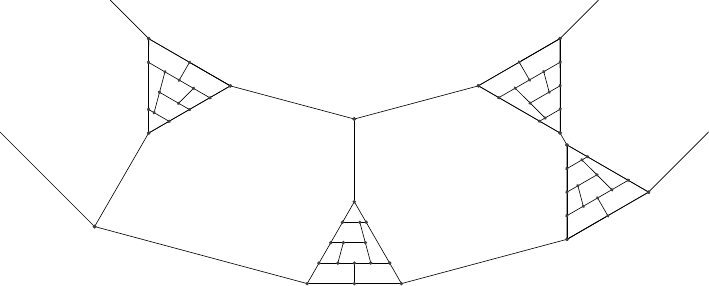}

	\caption{Three pillars of an $n$-prism with single- and both-sided TF-inflations:
		A single TF-inflation at the pillar's top (respectively bottom) vertex is shown for the left (respectively middle) pillar.
		In case of the right pillar, both vertices have been subjected to a TF-inflation.
	}\label{fig:tf-inflation-prism-multiple-pillars}
\end{figure}
By exhaustion one can derive the following result including the hamiltonian paths in question.
\begin{lemma}[\cite{tutte1946hamiltonian}~Hamiltonian paths of the TF]\label{lem:hamiltonian-paths-of-the-tf}
	Let TF be the graph displayed in Fig.~\ref{fig:isolated-tutte-fragment} with correspondingly assigned labels $a$, $b$, and $c$ for the $2$-valent vertices.
	Then, no hamiltonian path of the TF with endpoints $a$ and $b$ exists.
	However, two hamiltonian paths with endpoint-pair $(a,c)$, and four hamiltonian paths with endpoint-pair $(b,c)$ exist.
\end{lemma}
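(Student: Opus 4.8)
The plan is to avoid a blind search and instead drive the enumeration by local forcing together with a parity constraint on small edge cuts. Two elementary principles will do all the bookkeeping. First, since $a$, $b$, $c$ are $2$-valent, any one of them that is an endpoint of a hamiltonian path uses exactly one of its two incident edges, whereas a $2$-valent vertex that is internal uses both; and, of course, no hamiltonian path may contain a cycle. Second, I would use the following parity observation: for any vertex set $U$ and any hamiltonian path with endpoints $x,y$, the number of edges of the cut $\partial U$ used by the path is congruent modulo $2$ to $|\{x,y\}\cap U|$. This follows by summing path-degrees over $U$, where an internal vertex contributes $2$ and an endpoint contributes $1$, so that $\sum_{v\in U} d_P(v)=2|U|-|\{x,y\}\cap U|$ equals twice the number of path-edges inside $U$ plus the number of used cut edges.

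The fragment carries two natural $3$-edge-cuts to which this applies. Writing $p,q$ for the two neighbours of $c$ and $r$ for their common neighbour other than $c$, the vertices $\{c,p,q,r\}$ induce a $4$-cycle $c\,p\,r\,q\,c$ and are joined to the rest of the fragment by exactly three edges, one further edge at each of $p$, $q$, $r$; call this cut $\mathcal C_{\mathrm{top}}$. Analogously, writing $d$ for the common neighbour of $a$ and $b$ and $m$ for the third neighbour of $d$, the set $\{a,b,d\}$ has the $3$-edge-cut $\mathcal C_{\mathrm{bot}}=\{aa',bb',dm\}$, where $a',b'$ are the neighbours of $a,b$ distinct from $d$.

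For the nonexistence of an $(a,b)$-path I would argue as follows. Here $c$ is internal, so $cp$ and $cq$ are forced; since $\{c,p,q,r\}$ induces a $4$-cycle and the path contains no cycle, not both $pr,qr$ are used, and the parity observation applied to $\mathcal C_{\mathrm{top}}$ (with $|\{x,y\}\cap U|=0$, the isolating value $0$ being excluded because the path is connected and must reach $U$) forces exactly two of its cut edges, pinning the traversal inside $\{c,p,q,r\}$ to one of two reflected options. For $\mathcal C_{\mathrm{bot}}$ both endpoints lie in $U$, so again an even number of cut edges is used; discarding the degenerate completion $a\,d\,b$ leaves $dm$ forced and exactly one of $a,b$ attached to $d$. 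I would then propagate these forced edges: the two vertices on the longer side of the bounding triangle are each compelled to use both of their non-boundary edges, which in turn saturates the central vertices. The crux, and the step I expect to be most delicate, is to verify that in every one of the resulting branches the forced edges close up a cycle avoiding the sub-path that carries $a$ and $b$ (typically a $6$-vertex $a$-to-$b$ path disjoint from a forced $9$-cycle on the remaining vertices), contradicting hamiltonicity; this is exactly the obstruction Tutte engineered into the fragment.

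For the two counts I would rerun the same machinery with the modified endpoint set. For an $(a,c)$-path the vertex $c$ now lies in $U$ for $\mathcal C_{\mathrm{top}}$, so an \emph{odd} number of its cut edges is used and $c$ spends only one of $cp,cq$; the admissible sub-traversals of $\{c,p,q,r\}$ multiply accordingly, and propagating the forcing toward the base leaves a short list of completions. The genuine asymmetry of the fragment, namely that one slanted side is subdivided once more than the other, makes the number of surviving completions differ between the two targets, so a careful but now very short enumeration should yield exactly two $(a,c)$-paths and four $(b,c)$-paths. The main obstacle throughout is completeness rather than exhibiting paths: one must certify that cut-parity and degree forcing leave no branch unexamined. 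I would therefore present the whole argument as a single case tree rooted at $\mathcal C_{\mathrm{top}}$ and $\mathcal C_{\mathrm{bot}}$, closing each leaf either by a contradiction, a forced subcycle or an unsaturable vertex, or by a unique valid completion.
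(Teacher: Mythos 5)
Your proposal is, at bottom, the same approach the paper takes: the paper settles this lemma with the single sentence ``By exhaustion one can derive the following result including the hamiltonian paths in question,'' crediting the statement itself to Tutte, and yours is also an exhaustion --- just one driven by cut-parity and degree forcing rather than raw search. Your structural claims are all accurate: the two neighbours of $c$ together with their second common neighbour do induce a $4$-cycle joined to the rest of the fragment by exactly three edges; $\{a,b,d\}$ is cut off by the three edges you name; the parity observation is the standard degree-sum argument; and the forcing prunes the $(a,b)$-case exactly as you intend. Indeed, the bottom cut leaves two cases and the top cut two traversals; one of the four combinations is immediately inconsistent (when the edge $bb'$ is unused, the degree forcing at $b'$ dictates which of the two top traversals occurs), and each of the three surviving branches propagates deterministically to a disjoint union of an $a$--$b$ subpath and a cycle on the remaining vertices: a $6$-vertex path with a $9$-cycle (the configuration you predicted), an $8$-vertex path with a $7$-cycle, and an $11$-vertex path with a $4$-cycle. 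Relative to the paper's one-line appeal to exhaustion, this is what your organization buys: a case tree small enough to check by hand, and a structural reason (the unequal subdivision of the two slanted sides) why the $(a,c)$- and $(b,c)$-counts must differ.

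The caveat is that what you wrote is a plan, not yet a proof. The decisive steps --- the propagation that closes each $(a,b)$-branch with a forced cycle, and the enumeration that yields exactly two $(a,c)$-paths and four $(b,c)$-paths --- are announced (``I would then propagate,'' ``should yield exactly two'') rather than carried out, and the entire content of the lemma is precisely that finite verification; in particular the counts $2$ and $4$ are nowhere derived. Since every structural claim you make is correct and the branches do close as you predict, this is a gap of execution rather than of ideas; but to stand on its own, the case tree must be written out to its leaves (or, as the paper effectively does, delegated to Tutte's original enumeration).
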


\section{Results}\label{sec:results}

We start with some preparatory observations.
\begin{lemma}\label{lem:allowed-numbers-of-used-pillars}
	The number $r\in\mathbb{N}$ of pillars contained in a hamiltonian cycle of $C_{n}\,\square\,K_2$ can only be $r\in\{2,n\}$.
	If $r=2$, there must be a prism's quadrangle containing both pillars.
\end{lemma}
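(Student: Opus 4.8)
The plan is to argue entirely from degree constraints on the base and top cycles and then close with a connectivity count. Let $H$ be a fixed hamiltonian cycle and write $P\subseteq\mathbb{Z}_n$ for the set of indices $i$ with the pillar $s_is_i^{\phantom{i}}$... more precisely with $s_it_i\in E(H)$, so that $r=|P|$. First I would record the local picture: each $s_i$ has degree $3$ in $C_n\,\square\,K_2$, namely the two base-cycle edges $s_{i-1}s_i,s_is_{i+1}$ and the pillar $s_it_i$. Since $s_i$ has degree $2$ in $H$, it lies on its pillar iff it uses exactly one base-cycle edge, and it uses both base-cycle edges iff $i\notin P$. Summing base-degrees over all base vertices gives $2|B|=2n-r$, where $B$ is the set of base edges on $H$; hence $r$ is even and exactly $r/2$ base edges are missing from $H$, and symmetrically $r/2$ top edges. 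Note also $r\geq 2$, since with $r=0$ the full base cycle would be a component of $H$.

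The key structural step is to pin down the missing edges. If a base edge $s_js_{j+1}$ is missing, then each of $s_j,s_{j+1}$ would drop below degree $2$ unless it uses its pillar, so both endpoints are cyclically consecutive indices lying in $P$; moreover no two missing base edges can share a vertex, as that vertex would then have no base edge at all. Therefore the $r/2$ missing base edges constitute a matching of $P$ into $r/2$ pairs of consecutive indices, and the same holds on top. Decomposing $P$ into its maximal runs of consecutive indices, each run must have even length, and — this is the crucial observation I would isolate — as long as $P\neq\mathbb{Z}_n$ every run is a \emph{path}, whose perfect matching into adjacent pairs is unique. Consequently the missing base edges and the missing top edges sit at exactly the same positions.

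The finish is the connectivity count, which I expect to be the main obstacle to phrase cleanly. Assume $P\neq\mathbb{Z}_n$; then the base and top cycles each break into $r/2$ arcs, and the base arc and the top arc lying between the same consecutive pair of missing edges share both of their boundary indices. The two pillars at those two indices then splice exactly these two arcs into a closed cycle that avoids all other vertices, so $H$ decomposes into $r/2$ vertex-disjoint cycles. Being hamiltonian forces $r/2=1$, i.e.\ $r=2$; the single missing edge is then some $s_as_{a+1}$, so the two pillars $s_at_a$ and $s_{a+1}t_{a+1}$ are precisely the pillar-sides of the quadrangle $s_as_{a+1}t_{a+1}t_a$, as claimed. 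The only alternative is $P=\mathbb{Z}_n$, giving $r=n$; there the run wraps around into a cycle admitting two distinct adjacent-pair matchings, so the base and top missing edges may differ and the above obstruction dissolves. This yields $r\in\{2,n\}$ together with the quadrangle condition in the case $r=2$.
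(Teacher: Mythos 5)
Your proof is correct, and it takes a genuinely different route from the paper's. The paper argues locally: it picks covered pillars $p,p'$ with a maximal nonempty stretch of uncovered pillars between them, notes that the unused pillars force the connecting base and top paths into $H$, and reads off a contradiction with hamiltonicity (uncovered vertices, equivalently a proper subcycle) directly from this one configuration; the case $r=2$ without a common quadrangle is then handled by the same picture. You instead set up a global framework: the degree sum $2|B|=2n-r$ gives that $r$ is even with exactly $r/2$ missing base and $r/2$ missing top edges, the missing edges are shown to form perfect matchings of the pillar-index set $P$ into consecutive pairs, uniqueness of the adjacent-pair matching on each run (when $P\neq\mathbb{Z}_n$) aligns the base and top gaps, and $H$ then decomposes into $r/2$ vertex-disjoint cycles, forcing $r/2=1$. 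Both arguments ultimately rest on the same local forcing---an unused pillar puts both incident base edges and both incident top edges into $H$---but your organization buys more: the evenness of $r$ immediately rules out $r=n$ for odd $n$, which is part (i) of the paper's Lemma~3, and the observation that for $P=\mathbb{Z}_n$ the base and top matchings must be the \emph{two distinct} matchings of the even cycle (lest $H$ split into quadrangles) essentially yields the meandering structure and the count of two cycles in part (ii) of that lemma as well. The price is length; the paper's proof is shorter and figure-driven, though its ``without loss of generality\dots passes through the base cycle'' step glosses over the fact that both the base and the top paths are forced, which your version makes fully explicit.
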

\begin{proof}
	Requiring a cycle to include less than two pillars does not result in a hamiltonian cycle.
	If more than two but less than all pillars are contained in a hamiltonian cycle $H$ of $C_{n}\,\square\,K_2$, one can find distinct pillars $p$, $p'$ and $p''$ such that, firstly, all pillars $u_1,\dots,u_k$ (with $k\geq 1$) between $p$ and $p'$ are uncovered by $H$; and, secondly, a nonnegative number of uncovered pillars $u'_1,\dots,u'_{k'}$ ($k'\geq 0$) lies between $p'$ and $p''$; see Fig.~\ref{fig:uncovered-pillars}.
	Without loss of generality, we can assume that the part of $H$ being incident to $u_1,\dots,u_k$ passes through the base cycle of $C_{n}\,\square\,K_2$.
	Consequently, the top vertices of $u_1,\dots,u_k$ necessarily would remain uncovered by $H$, contradicting hamiltonicity.

	The argument is analogously applicable to the case that just two distinct pillars $p$ and $p'$, which are not part of the same prism's quadrangle, are covered by $H$.
\end{proof}
\begin{figure}[tb!]
	\centering
	\begin{tikzpicture}[
			thick,
			bv/.style={
					circle,
					draw=black,
					fill=black,
					thick,
					inner sep=1.5pt,
					minimum size=0.09cm
				},
			x=0.5cm,y=0.5cm
		]

		\node at (0,0) [bv]{};
		\node at (0,4) [bv]{};

		\node at (2,0) [bv]{};
		\node at (2,4) [bv]{};
		\node at (4,0) [bv]{};
		\node at (4,4) [bv]{};
		\node at (6,0) [bv]{};
		\node at (6,4) [bv]{};
		\node at (8,0) [bv]{};
		\node at (8,4) [bv]{};

		\node at (10,0) [bv]{};
		\node at (10,4) [bv]{};
		\node at (12,0) [bv]{};
		\node at (12,4) [bv]{};
		\node at (14,0) [bv]{};
		\node at (14,4) [bv]{};

		\node at (16,0) [bv]{};
		\node at (16,4) [bv]{};

		\begin{pgfonlayer}{bg}
			\draw[line width=2.0000000pt,color=myhamcyclecolor] (0,0) -- (0,4);
			\draw[line width=.5pt,color=mystdegdecolor] (2,0) -- (2,4);
			\draw[line width=.5pt,color=mystdegdecolor] (4,0) -- (4,4);
			\draw[line width=2.0000000pt,dotted,color=myhamcyclecolor] (2,0) -- (4,0);
			\draw[line width=.5pt,dotted,color=mystdegdecolor] (2,4) -- (4,4);

			\draw[line width=2.0000000pt,dotted,color=myhamcyclecolor] (4,0) -- (6,0);
			\draw[line width=.5pt,dotted,color=mystdegdecolor] (4,4) -- (6,4);

			\draw[line width=2.0000000pt,color=myhamcyclecolor] (8,0) -- (8,4);

			\draw[line width=.5pt,color=mystdegdecolor] (6,0) -- (6,4);

			\draw[line width=2.0000000pt,color=myhamcyclecolor] (0,0) -- (2,0);
			\draw[line width=.5pt,color=mystdegdecolor] (0,4) -- (2,4);

			\draw[line width=2.0000000pt,color=myhamcyclecolor] (6,0) -- (8,0);
			\draw[line width=.5pt,color=mystdegdecolor] (6,4) -- (8,4);

			\draw[line width=.5pt,color=mystdegdecolor] (8,0) -- (10,0);
			\draw[line width=2.0000000pt,color=myhamcyclecolor] (8,4) -- (10,4);

			\draw[line width=.5pt,dotted,color=mystdegdecolor] (10,0) -- (12,0);
			\draw[line width=2.0000000pt,dotted,color=myhamcyclecolor] (10,4) -- (12,4);

			\draw[line width=.5pt,dotted] (12,0) -- (14,0);
			\draw[line width=2.0000000pt,dotted,color=myhamcyclecolor] (12,4) -- (14,4);

			\draw[line width=.5pt,color=mystdegdecolor] (10,0) -- (10,4);
			\draw[line width=.5pt,color=mystdegdecolor] (12,0) -- (12,4);
			\draw[line width=.5pt,color=mystdegdecolor] (14,0) -- (14,4);

			\draw[line width=.5pt,color=mystdegdecolor] (14,0) -- (16,0);
			\draw[line width=2.0000000pt,color=myhamcyclecolor] (14,4) -- (16,4);

			\draw[line width=2.0000000pt,color=myhamcyclecolor] (16,0) -- (16,4);
		\end{pgfonlayer}

		\draw [decorate,decoration={brace,amplitude=5pt,mirror,raise=0.5em}]
		(2,0) -- (6,0) node[midway,yshift=-2.7em,xshift=0em]{\centering \parbox{2cm}{$u_1,\dots,u_k$\\\centering $k\geq 1$}};

		\draw [decorate,decoration={brace,amplitude=5pt,mirror,raise=0.5em}]
		(10,0) -- (14,0) node[midway,yshift=-2.7em,xshift=0em]{\centering \parbox{2cm}{$u'_1,\dots,u'_{k'}$\\\centering $k'\geq 0$}};

		\node at (0,-2.15em) []{$p$};
		\node at (8,-2.15em) []{$p'$};
		\node at (16,-2.15em) []{$p''$};
	\end{tikzpicture}
	\caption{Each hamiltonian cycle of $C_{n}\,\square\,K_2$ either covers exactly two consecutive or all pillars. Pillars that are uncovered in a given hamiltonian cycle are labeled~$u_i$,~$u'_i$.}\label{fig:uncovered-pillars}
\end{figure}
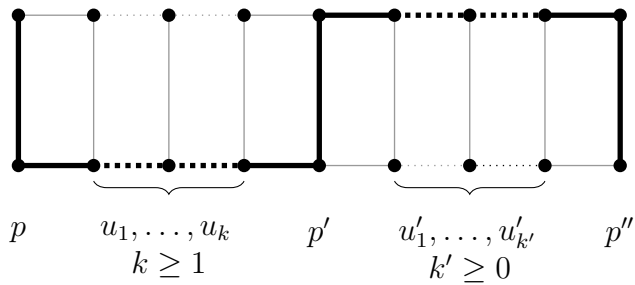
\begin{definition}
	A hamiltonian cycle $H$ in a prism $C_{n}\,\square\,K_2$ is called \emph{meandering} if no two edges of the top or bottom cycle are adjacent in $H$.
\end{definition}
It follows that a meandering hamiltonian cycle in $C_{n}\,\square\,K_2$ must contain all pillars of $C_{n}\,\square\,K_2$.
\begin{lemma}\label{lem:parity-implies-existence-nonexistence}
	The following is true.
	\begin{enumerate}[(i)]
		\item If $n$ is odd, there is no hamiltonian cycle of $C_{n}\,\square\,K_2$ containing all pillars.\label{ite:lem:parity-implies-existence-nonexistence-odd-case}
		\item If $n$ is even, there are precisely two cycles being hamiltonian and covering all pillars of $C_{n}\,\square\,K_2$; both are \emph{meandering}.\label{ite:lem:parity-implies-existence-nonexistence-even-case}
	\end{enumerate}
\end{lemma}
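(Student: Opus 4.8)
The plan is to observe that in any hamiltonian cycle $H$ of $C_{n}\,\square\,K_2$ containing all $n$ pillars, the edges of $H$ on the base and top cycles are forced to be perfect matchings. Indeed, since $\deg_H(s_i)=2$ and the pillar $s_it_i$ already occupies one of the two slots at $s_i$, exactly one of the incident base-cycle edges $s_{i-1}s_i$, $s_is_{i+1}$ lies in $H$; hence the base-cycle edges of $H$ form a perfect matching $M$ of the base cycle, and symmetrically the top-cycle edges form a perfect matching $N$ of the top cycle. This immediately disposes of part~(i): the cycle $C_n$ has a perfect matching only when $n$ is even, so for odd $n$ no admissible $M$ exists, and hence no such $H$ can exist.

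For part~(ii) I would use that when $n$ is even the cycle $C_n$ has exactly two perfect matchings, which I label $M^{0}=\{s_{2j}s_{2j+1}:j\}$ and $M^{1}=\{s_{2j+1}s_{2j+2}:j\}$ on the base cycle and analogously $N^{0},N^{1}$ on the top cycle (all indices in $\mathbb{Z}_n$). Consequently every candidate has the form $H=\{\text{all pillars}\}\cup M\cup N$ with $M\in\{M^0,M^1\}$ and $N\in\{N^0,N^1\}$, so there are at most four $2$-regular spanning subgraphs to examine. The remaining task is purely to decide which of these four is a single spanning cycle.

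The decisive step is a parity bookkeeping on the four candidates. If $M$ and $N$ carry the same index, then for each relevant $i$ the four edges $s_is_{i+1}$, $t_it_{i+1}$, $s_it_i$, $s_{i+1}t_{i+1}$ close up into the quadrangle $s_is_{i+1}t_{i+1}t_i$, so the subgraph splits into $n/2$ vertex-disjoint $4$-cycles and is not hamiltonian (as $n\geq 4$). If instead $M$ and $N$ have opposite indices, then tracing a walk from $s_0$ along the repeating pattern base-edge, pillar, top-edge, pillar advances the index by $2$ every four steps, so it returns to $s_0$ only after traversing all $2n$ vertices, yielding a single hamiltonian cycle. Exactly the two opposite-parity combinations survive, and they are distinct because their base matchings differ; this gives precisely two such cycles. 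Both are \emph{meandering} automatically: since the base- and top-cycle edges of $H$ form matchings, no two base-cycle (respectively top-cycle) edges share a vertex, and so none are adjacent in $H$.

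I expect the only genuine obstacle to be the single-cycle verification in the opposite-parity case, namely confirming that the alternating base/top traversal does not decompose into several shorter cycles for general even $n$; this is exactly what the index-advancement argument above settles, so it replaces any case-by-case check. Everything else reduces to bookkeeping on the two forced matchings.
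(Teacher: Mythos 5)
Your proposal is correct, but it is organized differently from the paper's proof. The paper argues sequentially: starting from the pillar $s_0t_0$, the requirement that every pillar lies in the cycle forces the edge sequence to alternate in the pattern pillar, top edge, pillar, base edge, $\dots$ (a ``meandering'' propagation), and such a pattern can close up into a cycle only when $n$ is even; the two possible choices for the direction of the first step yield the two cycles. You instead give a global characterization: the non-pillar edges of any such hamiltonian cycle must form perfect matchings of the base and of the top cycle, which makes part~(i) an immediate consequence of the standard fact that an odd cycle has no perfect matching, and reduces part~(ii) to inspecting the four candidate $2$-factors $\{\text{all pillars}\}\cup M\cup N$. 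Your enumeration buys something the paper leaves implicit: you explicitly rule out the two same-parity combinations by showing they decompose into $n/2$ vertex-disjoint quadrangles (i.e., you address head-on that a $2$-regular spanning subgraph need not be a single cycle), whereas the paper's forced step-by-step trace handles connectivity implicitly but never names the discarded configurations. Conversely, the paper's propagation argument is shorter and directly exhibits the meandering structure that the rest of the paper relies on, while in your version the meandering property arrives as a final corollary of the matching structure. Both arguments rest on the same local observation (each vertex uses its pillar plus exactly one cycle edge), so the difference is one of bookkeeping strategy rather than of underlying idea; your index-advancement check in the opposite-parity case is a correct and complete substitute for the paper's closure-parity claim.
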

\begin{proof}
	Let $B$ denote the base cycle and $T$ the top cycle of $C_{n}\,\square\,K_2$.
	Let $\Pi$ denote the set of pillars of $C_{n}\,\square\,K_2$.
	Let $C=(e_1,\dots,e_{2_n})$ denote the sequence of edges of a hamiltonian cycle of $C_{n}\,\square\,K_2$ further assuming that $e_1 = {s_0}{t_0}$.
	There can be just two scenarios meeting our assumptions: either $(e_{2n},e_2)\in B\times T$, or $(e_{2n},e_2)\in T\times B$.
	Both scenarios propagate a meandering form $C\in \Pi \times T \times \Pi \times B \times \dots \times \Pi \times T \times \Pi \times B\times \cdots$---with swapped roles of $B$ and $T$ in case of the initial assumption $(e_{2n},e_2)\in T\times B$.
	Only when $n$ is even, such a meandering hamiltonian cycle can successfully be closed; according to the two possible initial assumptions, two distinct meandering hamiltonian cycles then exist.
\end{proof}
\begin{remark}\label{rem:two-or-less-pillars-case-is-trivial}
	In contrast, it is easy to verify that a hamiltonian cycle of $C_{n}\,\square\,K_2$ containing only two pillars shared by a quadrangle exist for arbitrary $n$:
	It consists of the two pillars and all edges of the top and bottom cycles except the two edges joining the pillars in $C_{n}\,\square\,K_2$; this hamiltonian cycle does not meander like in the other cases.
\end{remark}
\begin{observation}\label{obs:usage-of-pillars-due-to-inflation}
	Due to the structure of the TF (recall Lemma~\ref{lem:hamiltonian-paths-of-the-tf}), whenever a vertex of $C_{n}\,\square\,K_2$ has been TF-inflated, each hamiltonian cycle of the inflated version of $C_{n}\,\square\,K_2$ needs to contain the incident pillar.
\end{observation}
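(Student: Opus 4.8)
The plan is to argue locally at the fragment inserted in place of the inflated vertex, exploiting that the TF communicates with the rest of the graph through only three edges, together with the forbidden $a$--$b$ hamiltonian path from Lemma~\ref{lem:hamiltonian-paths-of-the-tf}. First I would fix notation: let $v$ be the TF-inflated vertex, let $a,b,c$ be the three former $2$-valent vertices of the inserted TF (each now $3$-valent in the inflated prism), and let $e_a,e_b,e_c$ be the respective edges joining $a,b,c$ to the remainder of the graph. By the prism convention $v^c=c$ is incident to the pillar, so $e_c$ is precisely the pillar incident to $v$. Crucially, $\{e_a,e_b,e_c\}$ is an edge cut separating the interior of the TF from everything else.

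Next I would invoke the parity property of edge cuts: any cycle meets an edge cut in an even number of edges. Hence a hamiltonian cycle $H$ uses either $0$ or $2$ of the edges $e_a,e_b,e_c$. The case $0$ is impossible, since then $H$ would have to cover both the interior vertices of the TF and the remaining vertices of the prism without ever crossing the cut, forcing $H$ into at least two components and contradicting that it is a single cycle. Therefore exactly two of the three edges, say those at ports $x,y\in\{a,b,c\}$, lie in $H$. Examining the trace $H\cap\mathrm{TF}$ by an $H$-degree count, every interior vertex of the TF and the unused port carry internal degree $2$, whereas $x$ and $y$ carry internal degree $1$; such a subgraph is a single $x$--$y$ path together with possibly some cycles. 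Any such cycle would be a component of $H$ disjoint from the rest, again contradicting single-cyclehood, so $H\cap\mathrm{TF}$ is in fact a hamiltonian path of the TF with endpoints $x$ and $y$.

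The decisive step then applies Lemma~\ref{lem:hamiltonian-paths-of-the-tf}: since no hamiltonian path of the TF joins $a$ and $b$, the pair $\{x,y\}$ cannot equal $\{a,b\}$, so it is $\{a,c\}$ or $\{b,c\}$. In either case $c$ is a port, i.e.\ $e_c\in H$, and as $e_c$ is the pillar incident to $v$, the claim follows. I expect the only points needing care---the mild obstacle here---to be ruling out the $0$-crossing configuration and the stray interior cycles, both of which reduce immediately to the connectivity of a single hamiltonian cycle; the application of Lemma~\ref{lem:hamiltonian-paths-of-the-tf} is then automatic. The same local argument applies verbatim when both endpoints of a pillar are inflated, since the shared edge $t_i^c s_i^c$ serves as the $c$-port for each of the two fragments simultaneously.
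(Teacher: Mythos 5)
Your proof is correct and takes essentially the same approach the paper intends: the paper states this as an observation resting on Lemma~\ref{lem:hamiltonian-paths-of-the-tf} (the TF admits no hamiltonian $a$--$b$ path) together with the convention that $c$ is the port on the pillar, and your three-edge-cut, parity, and degree-count argument is exactly the rigorous filling-in of that reasoning, including the case where both ends of a pillar are inflated.
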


\begin{theorem}\label{thm:main-theorem-equivalence-existence-parity}
	Suppose $G$ is obtained from $C_{n}\,\square\,K_2$ by applying a TF-inflation to at least one vertex of $r\geq 3$ distinct pillars of $C_{n}\,\square\,K_2$.
	Then, the following assertions hold.
	\begin{enumerate}[(i)]
		\item Graph $G$ is hamiltonian if and only if $n$ is even.
		      Furthermore, for even $n$, every hamiltonian cycle of $G$ naturally corresponds to one of the two meandering hamiltonian cycles of $C_{n}\,\square\,K_2$.\label{ite:thm:main-theorem-equivalence-existence-parity-even-case}
		\item If $n$ is odd, then $G$ has hamiltonian paths.\label{ite:thm:main-theorem-equivalence-existence-parity-odd-case}
	\end{enumerate}
\end{theorem}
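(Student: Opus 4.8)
The plan is to build a two-way \emph{contraction/lifting dictionary} between hamiltonian structures of $G$ and meandering hamiltonian structures of $C_{n}\,\square\,K_2$: collapsing each inflated Tutte fragment back to a single prism vertex turns a hamiltonian structure of $G$ into one of $C_{n}\,\square\,K_2$, and conversely a meandering structure of $C_{n}\,\square\,K_2$ can be threaded through the fragments. Feeding this dictionary into Lemmas~\ref{lem:allowed-numbers-of-used-pillars} and~\ref{lem:parity-implies-existence-nonexistence} should yield both parts at once.

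First, for the contraction (the ``only if'' half of (i)), I would take a hamiltonian cycle $H$ of $G$ and look at one copy of the TF. It is joined to the rest of $G$ by exactly the three edges formerly incident to $a,b,c$; since $H$ is a cycle it meets this three-edge boundary an even, hence exactly two, number of times, so $H$ restricted to the fragment is a hamiltonian path of the TF between two of $a,b,c$. By Lemma~\ref{lem:hamiltonian-paths-of-the-tf} the pair $(a,b)$ is impossible, so one endpoint is always $c$ and the incident pillar is used, which is precisely Observation~\ref{obs:usage-of-pillars-due-to-inflation}. Collapsing every fragment to its original vertex then turns $H$ into a $2$-regular spanning connected subgraph of $C_{n}\,\square\,K_2$, i.e.\ a hamiltonian cycle $H'$, in which each inflated vertex is met by its pillar and exactly one cycle edge. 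Because at least one vertex of $r\ge 3$ distinct pillars was inflated, $H'$ contains at least three pillars, so Lemma~\ref{lem:allowed-numbers-of-used-pillars} forces it to contain all $n$ pillars. Hence for odd $n$ Lemma~\ref{lem:parity-implies-existence-nonexistence}(\ref{ite:lem:parity-implies-existence-nonexistence-odd-case}) is contradicted and $G$ is non-hamiltonian, while for even $n$ Lemma~\ref{lem:parity-implies-existence-nonexistence}(\ref{ite:lem:parity-implies-existence-nonexistence-even-case}) identifies $H'$ as one of the two meandering hamiltonian cycles, giving the stated correspondence.

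For the ``if'' half of (i) I would run the dictionary in reverse: starting from either meandering hamiltonian cycle $H'$ of $C_{n}\,\square\,K_2$ (even $n$), meandering means every vertex is met by its pillar plus a single cycle edge, so at each inflated vertex the two used edges sit at $c$ and at one of $a,b$; replacing that passage by a hamiltonian TF-path with endpoints $(a,c)$ or $(b,c)$, which exist by Lemma~\ref{lem:hamiltonian-paths-of-the-tf}, lifts $H'$ to a hamiltonian cycle of $G$ (the two fragments of a both-sided pillar being handled independently). For part (ii), with $n$ odd, I would exhibit the explicit meandering hamiltonian \emph{path}
\[
s_0,\,t_0,\,t_1,\,s_1,\,s_2,\,t_2,\,t_3,\,s_3,\,\dots,\,s_{n-1},\,t_{n-1},
\]
which alternates a pillar with a single top/bottom cycle edge, visits all $2n$ vertices, uses all $n$ pillars, and whose endpoints $s_0$ and $t_{n-1}$ are each met only by a pillar. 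Lifting it exactly as before uses $(a,c)$/$(b,c)$ TF-paths at every interior inflated vertex, and at an inflated endpoint the TF-path still runs from $c$ to $a$ or $b$, its free end becoming an endpoint of a hamiltonian path of $G$; this proves traceability.

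The step I expect to be the main obstacle is making the contraction rigorous. One must argue carefully that a hamiltonian cycle meets each three-edge fragment boundary in exactly two edges (excluding the degenerate crossing counts $0$), that the induced restriction is genuinely a \emph{spanning} path of the fragment with endpoints avoiding $(a,b)$, and—most delicately—that collapsing all fragments \emph{simultaneously} preserves connectivity, so that a single hamiltonian cycle results rather than a disjoint union of cycles. Once this dictionary is established, the parity dichotomy is immediate from the two prism lemmas.
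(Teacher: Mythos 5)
Your proposal is correct, and its part~(i) follows essentially the same route as the paper: contract each inflated fragment of a hamiltonian cycle $H$ of $G$ to recover a hamiltonian cycle $H'$ of $C_{n}\,\square\,K_2$, use Observation~\ref{obs:usage-of-pillars-due-to-inflation} to guarantee at least three covered pillars, then invoke Lemma~\ref{lem:allowed-numbers-of-used-pillars} and Lemma~\ref{lem:parity-implies-existence-nonexistence} for the parity dichotomy and the correspondence with the two meandering cycles; conversely, lift a meandering cycle by substituting $(a,c)$- or $(b,c)$-hamiltonian paths of the TF, exactly as in the paper. In fact, your edge-cut argument (a hamiltonian cycle crosses each fragment's three-edge boundary an even and nonzero, hence exactly twice, number of times, so it restricts to a spanning path of the fragment whose endpoint pair cannot be $(a,b)$ by Lemma~\ref{lem:hamiltonian-paths-of-the-tf}) makes rigorous precisely the contraction step that the paper states without justification, so the ``main obstacle'' you flag at the end is already resolved by your own second paragraph. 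Where you genuinely diverge is part~(ii): the paper inserts an additional non-inflated pillar, observes that the augmented graph derives from the even prism $C_{n+1}\,\square\,K_2$ by the same TF-inflations, obtains from part~(i) a hamiltonian cycle through all pillars, and then deletes the two new vertices together with their incident cycle edges to leave a hamiltonian path of $G$. You instead exhibit an explicit meandering hamiltonian path of $C_{n}\,\square\,K_2$ for odd $n$ (pillar, cycle edge, pillar, cycle edge, \dots, ending in a pillar), which uses all $n$ pillars, and lift it through the fragments, handling an inflated path endpoint by letting the free end of a hamiltonian TF-path from $c$ become an endpoint of the lifted path. Both arguments are sound: the paper's reduction is shorter because it recycles the even-case machinery wholesale, whereas your construction is more self-contained and explicit, avoiding the (mild but real) need to verify that the pillar-insertion operation commutes with the TF-inflations and that deleting the auxiliary vertices leaves a path whose edges all lie in $G$.
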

\begin{proof}
	Let us first prove~\eqref{ite:thm:main-theorem-equivalence-existence-parity-even-case}.
	By contrapositive, if $n$ is odd and $G$ has a hamiltonian cycle $H$, then the cycle $H'$ resulting from the contraction of those edges in $H$ that stem from a TF-inflation, would yield a hamiltonian cycle for $C_{n}\,\square\,K_2$.
	From the assumption combined with Observation~\ref{obs:usage-of-pillars-due-to-inflation} we get that $H'$ necessarily covers at least three pillars of $C_{n}\,\square\,K_2$, implying by Lemma~\ref{lem:allowed-numbers-of-used-pillars} that all pillars must be covered by $H'$.
	However, since $n$ is odd, this yields a contradiction to Lemma~\ref{lem:parity-implies-existence-nonexistence}.
	Consequently no hamiltonian cycle $H$ in $G$ exists.

	Let us now prove that for even $n$, hamiltonicity of $G$ ensues.
	By Lem\-ma~\ref{lem:parity-implies-existence-nonexistence}\,\eqref{ite:lem:parity-implies-existence-nonexistence-even-case}, there is a hamiltonian cycle $H$ covering all pillars of $C_{n}\,\square\,K_2$.
	Whenever a TF-inflation is applied to a vertex $v$ of $C_{n}\,\square\,K_2$, we can augment $H$ towards a feasible hamiltonian cycle for the post-inflation version of $C_{n}\,\square\,K_2$:
	Without loss of generality we can assume $v=s_i$, ${s_i}{s_{i+1}}\in H$, and adjacency of $s_i^{b}$ and $s_{i+1}$.
	Then one simply replaces the former pillar ${s_i}{t_i}$ by the union of the new pillar ${s_i^c}{t_i}$ and the hamiltonian path connecting $s_i^c$ with $s_i^b$ (running within the TF and existing due to Lemma~\ref{lem:hamiltonian-paths-of-the-tf}).
	As these inter-TF hamiltonian paths are not unique (see Lemma~\ref{lem:hamiltonian-paths-of-the-tf}), multiple versions for the augmented hamiltonian cycle are conceivable; see Section~\ref{sec:counting-hamilton-cycles} below.
	However, they all derive from the same hamiltonian cycle in $C_{n}\,\square\,K_2$.
	The argument applies for iterated TF-inflation; consequently, $G$ is hamiltonian.

	Let us show that an existing hamiltonian path $H$ of $G$ for even $n$ is---after contracting its inter-TF edges---coincident with a meandering path of $C_{n}\,\square\,K_2$.
	The number of pillars covered by $H$ in $G$ is invariant under inter-TF paths contractions.
	By assumption, at least three pillars of $G$ are covered by $H$ implying that at least three pillars in $C_{n}\,\square\,K_2$ are covered by the contracted cycle $H'$.
	By Lemma~\ref{lem:allowed-numbers-of-used-pillars}, the latter insight implies that $n$ pillars must be covered by $H'$, in turn meaning by Lemma~\ref{lem:parity-implies-existence-nonexistence}\,\eqref{ite:lem:parity-implies-existence-nonexistence-even-case} that $H'$ is a meandering hamiltonian cycle.

	\medskip

	Next, let us show~\eqref{ite:thm:main-theorem-equivalence-existence-parity-odd-case}.
	Insert in $G$ an additional pillar which is not subject to any TF-inflation.
	This augmented graph equivalently derives from the even-sided prism $C_{n+1}\,\square\,K_2$ by suitable TF-inflations and therefore possesses a hamiltonian cycle $H_\ast$ containing all pillars.
	By excluding from $H_\ast$ this additional pillar and its two incident edges in $H$, we obtain a hamiltonian path in $G$.
\end{proof}
From the case $r=2$ in Lemma~\ref{lem:allowed-numbers-of-used-pillars} we can analogously derive the following result.
\begin{corollary}\label{cor:inflated-prism-two-pillars-case}
	The conclusions of Theorem~\ref{thm:main-theorem-equivalence-existence-parity} remain valid if $r=2$ provided no quadrangle of the prism contains both pillars in question.
\end{corollary}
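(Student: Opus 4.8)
The plan is to reprove Theorem~\ref{thm:main-theorem-equivalence-existence-parity} almost verbatim, changing only the single step where the hypothesis $r\geq 3$ was invoked. The whole argument there rests on forcing the contracted cycle $H'$ of $C_{n}\,\square\,K_2$ to cover all $n$ pillars; the assumption $r\geq 3$ served solely to guarantee that $H'$ covers at least three pillars, after which Lemma~\ref{lem:allowed-numbers-of-used-pillars} upgrades this to all $n$. With $r=2$ one loses the ``at least three'' bound, so I would instead exploit the geometric proviso directly.

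First I would treat the non-existence half ($n$ odd). Assume toward a contradiction that $G$ has a hamiltonian cycle $H$, and let $H'$ be the cycle of $C_{n}\,\square\,K_2$ obtained by contracting the inter-TF edges of $H$. Since at least one vertex of each of the two pillars is TF-inflated, Observation~\ref{obs:usage-of-pillars-due-to-inflation} forces both inflated pillars to be covered by $H$, hence by $H'$. By Lemma~\ref{lem:allowed-numbers-of-used-pillars} the number of pillars covered by $H'$ is either $2$ or $n$. If it were exactly $2$, these two covered pillars would have to be precisely the two inflated ones---there being no others available---and the same lemma would then force them to share a prism quadrangle, contradicting the proviso. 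Hence $H'$ covers all $n$ pillars, which for odd $n$ is impossible by Lemma~\ref{lem:parity-implies-existence-nonexistence}\,\eqref{ite:lem:parity-implies-existence-nonexistence-odd-case}. This rules out a hamiltonian cycle.

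For even $n$ the positive construction transfers unchanged: Lemma~\ref{lem:parity-implies-existence-nonexistence}\,\eqref{ite:lem:parity-implies-existence-nonexistence-even-case} supplies a meandering hamiltonian cycle of $C_{n}\,\square\,K_2$ covering all pillars, and the local augmentation at each inflated vertex described in the proof of Theorem~\ref{thm:main-theorem-equivalence-existence-parity} never used $r\geq 3$. The correspondence statement follows because the displayed argument shows that any hamiltonian cycle of $G$ contracts to a cycle covering all $n$ pillars, which is meandering by Lemma~\ref{lem:parity-implies-existence-nonexistence}\,\eqref{ite:lem:parity-implies-existence-nonexistence-even-case}. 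Finally, the odd-$n$ hamiltonian-path claim is obtained exactly as before: insert one extra non-inflated pillar to pass to the even prism $C_{n+1}\,\square\,K_2$, invoke the even case to get a hamiltonian cycle covering all pillars, and delete that extra pillar together with its two incident edges.

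The main obstacle---though a mild one---is the middle step: verifying that ``$H'$ covers exactly two pillars'' forces those two to be the inflated ones. This is precisely where the proviso that the inflated pillars do not share a quadrangle is essential, since the quadrangle-sharing pair is exactly the configuration that Lemma~\ref{lem:allowed-numbers-of-used-pillars} permits for a two-pillar hamiltonian cycle and which we must exclude. One should take care that ``at least one vertex of each pillar is inflated'' genuinely forces coverage of \emph{both} pillars---which it does, by Observation~\ref{obs:usage-of-pillars-due-to-inflation}---so that no two-pillar cycle can slip through by covering a different, quadrangle-sharing pair.
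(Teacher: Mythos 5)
Your proposal is correct and takes essentially the same route the paper intends: the paper derives the corollary ``analogously'' from the $r=2$ case of Lemma~\ref{lem:allowed-numbers-of-used-pillars}, which is precisely your substitution---replacing the ``at least three pillars are covered'' step by the observation that a contracted cycle covering exactly the two inflated, non-quadrangle-sharing pillars would contradict Lemma~\ref{lem:allowed-numbers-of-used-pillars}, so all $n$ pillars must be covered. The remaining parts of your argument (the even-case augmentation, the correspondence to meandering cycles, and the extra-pillar trick for hamiltonian paths) carry over the proof of Theorem~\ref{thm:main-theorem-equivalence-existence-parity} unchanged, exactly as the paper's one-line justification suggests.
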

We observe that the minimum-cardinality counterexamples to Tait's conjecture due to Holton and McKay~\cite[Figure 1.1]{holton1988smallest} are special cases more generally captured by Corollary~\ref{cor:inflated-prism-two-pillars-case}.
We note in passing that in the case of $r=2$ where the two pillars in question are part of a quadrangle of the prism, the inflated graph has an associated unique hamiltonian cycle in $C_{n}\,\square\,K_2$; see Remark~\ref{rem:two-or-less-pillars-case-is-trivial}.

\subsection{Counting hamiltonian cycles after TF-inflation} \label{sec:counting-hamilton-cycles}

Fixing a prism $C_{n}\,\square\,K_2$, for even $n\geq 4$, we focus on its two meandering hamiltonian cycles $H_1$ and $H_2$.
Let $F$ be a TF used in a TF-inflation of $C_{n}\,\square\,K_2$.
There are two hamiltonian paths in $F$ starting with $a$ and ending in $c$. Likewise, there are four hamiltonian paths in $F$ starting with $b$ and ending in $c$.
Correspondingly we speak of an $[a,c]$-traversal, respectively of a $[b,c]$-traversal, of $F$.
Now, if in the TF-inflation of $C_{n}\,\square\,K_2$ the hamiltonian cycle $H_1$ is being expanded by an $[a,c]$-traversal of $F$, then the hamiltonian cycle $H_2$ is being expanded by a $[b,c]$-traversal of $F$.
Suppose $G$ derives from $C_{n}\,\square\,K_2$ by $q$ TF-inflations.
Then, we have the following:
If $H_1$ expands in $q_1$ copies of $F$ by an $[a,c]$-traversal, then it expands in the remaining $q-q_1$ copies of $F$ by a $[b,c]$-traversal.
Correspondingly, $H_2$ expands in $q_1$ copies of $F$ by a $[b,c]$-traversal, and it expands in the remaining $q-q_1$ copies of $F$ by an $[a,c]$-traversal.
It follows that $H_1$ yields $2^{q_1} \cdot 4^{q-q_1}$ hamiltonian cycles in $G$, and $H_2$ yields $4^{q_1} \cdot 2^{q-q_1}$ hamiltonian cycles in $G$.
This gives a total number of
\begin{equation*}
	h(q, q_1) = 2^{q_1} \cdot 4^{q-q_1}+4^{q_1} \cdot 2^{q-q_1}
\end{equation*}
hamiltonian cycles.
Using the arithmetic-geometric mean-inequality (respectively, a trivial upper bound) we get
\begin{equation*}
	2^{3q/2+1} = 2\sqrt{2^q4^q}\leq h(q,q_1)\leq 2^{2q+1}.
\end{equation*}

\subsection{Recovering Tutte's counterexample}

Having a closer look at Theorem~\ref{thm:main-theorem-equivalence-existence-parity}, it turns out that the latter examines---up to a subsequently described contraction---generalizations of Tutte's counterexample to $n$-prisms for arbitrary odd $n$.
Let us apply Theorem~\ref{thm:main-theorem-equivalence-existence-parity} to the prism $C_{3}\,\square\,K_2$ specifically, subject to the following conditions.
\begin{enumerate}[(i)]
	\item The three TF-inflations are performed at the three vertices of the base cycle $(s_0,s_1,s_2)$.
	\item The resulting graph $G^\ast$ should be as symmetrical as possible; this means that without loss of generality $G^\ast$ should contain in particular the edges ${s_0^b}{s_1^a}$, ${s_1^b}{s_2^a}$, and ${s_2^b}{s_0^a}$.
\end{enumerate}
In fact, the symmetry group of $G^\ast$ is the cyclic group of order three, and it is fixed-point-free.
However, the top cycle is still a triangle $\Delta$.
By contracting $\Delta$, the resulting graph or its mirror image is the usual presentation of Tutte's counterexample to Tait's conjecture.

\section{Conclusion}\label{sec:conclusion}

Combining the $n$-sided prism with the operation of TF-inflation, we created an infinite family of hamiltonian planar $3$-connected cubic graphs (Theorem~\ref{thm:main-theorem-equivalence-existence-parity} and Corollary~\ref{cor:inflated-prism-two-pillars-case}, case $n$ even), respectively such non-hamiltonian graphs (corresponding odd case).
This was done subject to the condition that the $2$-valent vertex $c$ has to be incident with the corresponding pillar.
Consequently, Tutte's counterexample to Tait's conjecture resulted from a minimal element (in the case $r>2$) after contracting the top triangle, whereas Holton and McKay's counterexample to Tait's conjecture appears as a minimal element in the case $r = 2$; see Corollary~\ref{cor:inflated-prism-two-pillars-case}.
In other words, our results show that these counterexamples are no coincidence; rather, they are in a certain sense just minimal elements in the corresponding partial order of counterexamples.
On top of this, we also showed how divergent the number of hamiltonian cycles may be depending on the applied TF-inflations.

\section*{Acknowledgments}

We want to thank the first author of this paper, Herbert Fleischner, who unfortunately passed away just few days before the submission of this work.
The fundamental ideas of this paper are his as well as much of the writing.
Herbert, we are grateful to have had the opportunity to work together with you, it was always a big pleasure.
We will miss you!

This work is partially supported by the program VGSCO [10.55776/W1260-N35] of the Austrian Science Fund (FWF).

\bibliographystyle{elsarticle-num}

\end{document}